\numberwithin{equation}{section}
\newtheorem{theorem}{Theorem}[section]
\newtheorem{lemma}[theorem]{Lemma}
\begin{document}

\author{Ajai Choudhry}
\title[A diophantine problem]{A Diophantine Problem concerning\\
 Third Order Matrices}
\date{}

\begin{abstract} 
In this paper we find a third order unimodular matrix, none of whose entries is $1$ or $-1$, such that when each entry of the matrix is replaced by its cube, the resulting matrix is also unimodular. Further, we  find third order square integer matrices $(a_{ij})$, none of the integers $a_{ij}$ being $1$ or $-1$,  such that $\det{(a_{ij})}=k$ and  $\det{(a_{ij}^3)}=k^3$, where $k$ is a nonzero integer.
\end{abstract}

\maketitle

\noindent Mathematics Subject Classification 2020: Primary: 15B36;\\
\noindent Secondary: 11C20, 11D25, 11D41. 
\smallskip

\noindent Keywords: unimodular matrix; third order matrix; third order determinant.

\section{Introduction}\label{intro}
This paper is concerned with the problem of finding a $3 \times 3$ integer matrix $(a_{ij})$, with no $a_{ij} =\pm 1$, such that $\det{(a_{ij})}=1$, and further, when each entry of the matrix $A$ is replaced by its cube, then also the determinant is 1, that is, $\det{(a_{ij}^3)}=1$. We also consider the more general problem of finding  a matrix $(a_{ij})$, none of the $a_{ij}$ being 0 or $\pm 1$,  such that  $\det{(a_{ij})}=k$ and   $\det{(a_{ij}^3)}=k^3$, where $k$ is a nonzero  integer.

 It is pertinent to recall that Molnar \cite{Mo} had posed the problem of finding an  $n \times n$ integer matrix $(a_{ij})$, with no $a_{ij} =\pm 1$, such that   $\det{(a_{ij})}=1$ and also   $\det{(a_{ij}^2)}=1$. Several authors found solutions of the problem when $n=3$ \cite{Gu1, Gu2, Gu3}.   In fact,  D\u{a}nescu, V\^aj\^aitu, and Zaharescu \cite{DVZ} solved Molnar's problem for matrices of arbitrary  order.   Guy  restricted the problem to $3 \times 3$ matrices in his book,   ``Unsolved problems in number theory'' \cite[Problem F28, pp.\ 265--266]{Gu4},   but  imposed  the additional condition  that all the entries $a_{ij}$ should also be nonzero. He concluded his discussion  by asking, ``Will the problem extend to cubes?''.  This question has, until now, remained completely unanswered. 

If $A=(a_{ij})$ is any $n \times n$ matrix, we will write $A^{(3)}$ to denote the matrix $(a_{ij}^3)$ obtained by  replacing each entry of the  matrix $A$ by its cube\footnote{This notation is adapted from the notation used by  D\u{a}nescu et al. \cite{DVZ}.}.  
We obtain in this paper a $3 \times 3$  matrix $A=(a_{ij})$ whose entries are  univariate polynomials with integer coefficients, with no $a_{ij} =\pm 1$, and such that both   $\det{A}$ and   $\det{(A^{(3)})}$ are equal to 1. We also obtain a parametric solution of  the more general problem of finding  a $3 \times 3$ integer matrix $(a_{ij})$, none of the $a_{ij}$ being 0 or $\pm 1$, and such that    $\det{(a_{ij})}=k$ and     $\det{(a_{ij}^3)}=k^3$, where $k$ is a nonzero integer.

\section{Unimodular matrices that remain unimodular \\when each entry is replaced by its cube}\label{unimodmatrices}

If $M$ is any $n \times n$ matrix such that both $\det{M}=1$ and $\det{(M^{(3)})}=1$, then several other matrices satisfying these conditions can readily be derived from the matrix $M$. In Section \ref{genlemma} we give a lemma that lists out such matrices, and in Section \ref{33matrices} we obtain third order matrices satisfying such conditions.

\subsection{A general lemma}\label{genlemma}
We will denote the transpose of a matrix $M$ by $M^T$. Further, we will write $E_{ij}$ to denote the elementary matrix obtained by interchanging the $i$-th and $j$-th rows of the identity matrix, and $E_i(\alpha)$ to denote the elementary matrix obtained by multiplying the $i$-the row of the identity matrix by $\alpha$. 
\begin{lemma} If $M$ is any $n \times n $ integer matrix with the property that both $\det{M}=1$ and $\det({M^{(3)})}=1$, then the following integer  matrices, derived from the matrix $M$, also have this property:
\begin{enumerate}[(i)]
\item the matrix $M^{T}$;

\item the matrices $M_1=E_{i_1}(-1)E_{i_2}(-1)M$ and $M_2=ME_{i_1}(-1)E_{i_2}(-1)$ where $i_1, i_2 \in \{1, \ldots, n\}$ such that $i_1 \neq i_2$;

\item the matrices $M_3=E_{i_1i_2} E_{j_1j_2}M, M_4=ME_{i_1i_2} E_{j_1j_2}$ and \\
$M_5=E_{i_1i_2}ME_{j_1j_2}$ where $i_1 \neq i_2$, $j_1 \neq j_2$ and $i_1, i_2$, $j_1, j_2 \in \{1, \ldots, n\}$; 

\item the matrix $M([i,j],\alpha)=E_i(\alpha)ME_j(\alpha^{-1})$ where $i, j \in \{1, \ldots, n\}$ and $\alpha$ is a nonzero rational number so chosen that the entries of the matrix $M([i,j],\alpha)$ are all integers. 
\end{enumerate}
\end{lemma}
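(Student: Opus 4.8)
The plan is to verify each of the four claims by reducing it to two invariance facts about the operation $M \mapsto M^{(3)}$, namely how that operation interacts with the elementary matrices appearing in the lemma. The key observation I would establish first is the multiplicative compatibility on the relevant operations: if $E$ is either a permutation matrix $E_{ij}$ or a diagonal matrix $E_i(\alpha)$, then left- or right-multiplication commutes with cubing of entries in a controlled way. Concretely, for a permutation matrix one has $(EM)^{(3)} = E M^{(3)}$ and $(ME)^{(3)} = M^{(3)} E$, because permuting rows or columns merely reorders entries and cubing is applied entrywise. For the scaling matrices one has $\bigl(E_i(\alpha) M\bigr)^{(3)} = E_i(\alpha^3) M^{(3)}$ and similarly on the right, since multiplying a row by $\alpha$ multiplies each entry of that row by $\alpha$, and cubing then produces the factor $\alpha^3$. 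These two identities are the engine for all four parts.

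With these in hand, each case follows by a short determinant computation. For part (i), transposition preserves determinants and clearly $\bigl(M^T\bigr)^{(3)} = \bigl(M^{(3)}\bigr)^T$, so both determinant conditions are inherited. For part (ii), the matrix $E_{i_1}(-1)E_{i_2}(-1)$ has determinant $(-1)(-1)=1$, so $\det M_1 = \det M = 1$; moreover $M_1^{(3)} = E_{i_1}((-1)^3)E_{i_2}((-1)^3) M^{(3)} = E_{i_1}(-1)E_{i_2}(-1) M^{(3)}$, whose determinant is again $\det\bigl(M^{(3)}\bigr)=1$, and the same works for $M_2$. For part (iii), each row interchange $E_{i_1 i_2}$ contributes a factor $-1$ to the determinant, and two interchanges give $(-1)^2=1$; the permutation identity above shows $M_3^{(3)} = E_{i_1 i_2} E_{j_1 j_2} M^{(3)}$, so both determinants of $M_3$ equal $1$, with the analogous argument for $M_4$ and $M_5$.

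Part (iv) is the one requiring the scaling identity in its full strength. Since $\det\bigl(E_i(\alpha)\bigr)=\alpha$ and $\det\bigl(E_j(\alpha^{-1})\bigr)=\alpha^{-1}$, the conjugation-like product gives $\det M([i,j],\alpha) = \alpha \cdot \det M \cdot \alpha^{-1} = \det M = 1$. For the cubed matrix, applying the scaling identity on both sides yields
\begin{equation*}
M([i,j],\alpha)^{(3)} = E_i(\alpha^3)\, M^{(3)}\, E_j(\alpha^{-3}),
\end{equation*}
whose determinant is $\alpha^3 \cdot \det\bigl(M^{(3)}\bigr) \cdot \alpha^{-3} = \det\bigl(M^{(3)}\bigr) = 1$. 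The hypothesis that $\alpha$ is chosen so that all entries of $M([i,j],\alpha)$ are integers guarantees we remain in the integer setting, and since cubing integers gives integers, $M([i,j],\alpha)^{(3)}$ is an integer matrix as well. I expect the only genuine subtlety to lie in part (iv), where one must be slightly careful that the case $i=j$ is still admissible: there the two scalings act on the same row, but the identity $M([i,i],\alpha) = E_i(\alpha) M E_i(\alpha^{-1})$ and its cubed analogue still hold, so the determinant computations go through unchanged. The remaining steps are routine verifications once the two multiplicative identities are recorded.
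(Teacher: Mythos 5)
Your proposal is correct and follows essentially the same route as the paper: both rest on the observation that entrywise cubing commutes with row/column permutations and transposition, and converts a row (or column) scaling by $\alpha$ into a scaling by $\alpha^3$, after which each part reduces to a one-line determinant computation. Your explicit recording of the two multiplicative identities up front, and your remarks on the $i=j$ case and integrality, are slightly more careful than the paper's treatment but do not change the argument.
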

\begin{proof} Clearly, $\det{(M^T)}=1$, and $(M^T)^{(3)}= (M^{(3)})^T$, hence $\det{((M^T)^{(3)})}$ $=\det{((M^{(3)})^T)}=\det{(M^{(3)})}=1$, which proves the first part of the lemma. To prove {\it (ii)},  we note that $\det{(E_{i_1}(-1))}=\det{(E_{i_2}(-1))}=-1$, hence $\det{M_1}=\det{M}$, and  by the definition of $M_1^{(3)}$, it follows that $M_1^{(3)}=E_{i_1}(-1)E_{i_2}(-1)M^{(3)}$,  hence $\det{(M_1^{(3)})}=\det{(M^{(3)})}=1$. This proves the result for the matrix $M_1$. The proofs for the other matrices listed at {\it (ii)} and {\it (iii)} above are similar and are accordingly omitted.

Finally, regarding the last matrix $M([i,j],\alpha)$, it is readily seen  that $\det{(M([i,j],\alpha))}$ $=\det{M}=1$. Further, on  multiplying the entries of the $i$th row of the matrix $M^{(3)}$ by $\alpha^3$ and  then multiplying the  entries of the $j$th column by $\alpha^{-3}$, we get the matrix ${(M([i,j],\alpha))^{(3)}}$.  It follows that $\det{(M([i,j],\alpha)^{(3)})}=\det{(M^{(3))}}=1$. 
\end{proof}
 
\subsection{Third order unimodular matrices}\label{33matrices}
 
We will now obtain third order square integer  matrices $A=(a_{ij})$, with no $a_{ij} = \pm 1$,  such that both   $\det{(a_{ij})}$ and   $\det{(a_{ij}^3)}$ are equal to 1. We have to solve two simultaneous equations in nine independent variables.  A fair amount of computer search yielded essentially only one such matrix, namely, 
\begin{equation}
A_1=\begin{bmatrix}
7 &  11 &  2\\ 
13 &  20 &  3 \\
2 &  3 &  0
\end{bmatrix},
\label{defmatrixA1}
\end{equation}
which  satisfies the conditions  $\det{A_1}=1$ and   $\det{(A_1^{(3)})}=1$.

We now give a theorem that gives a parametric solution  of the problem.

\begin{theorem}\label{unimodular} The matrix $A$ defined by
\begin{equation}
A=\begin{bmatrix}
(16t+1)(2592t^2+288t+7) & (18t+1)(24t+1)(144t+11) & 2\\
(12t+1)(5184t^2+540t+13) & (72t+5)(1296t^2+153t+4) & 3\\
 2 & 3 & 0
\end{bmatrix},
\label{defmatrixA}
\end{equation}
where $t$ is an arbitrary parameter, satisfies the conditions   $\det{A}=1$ and   $\det{(A^{(3)})}=1$.
\end{theorem}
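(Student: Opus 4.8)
The matrix is given explicitly, so the plan is simply to compute the two determinants, but organized so as to minimize and control the algebra. First I would exploit the zero entry $a_{33}=0$ together with the small entries $2,3$ filling out the last row and column. Expanding $\det A$ along the third row gives
\[
\det A = 2(3a_{12}-2a_{22}) - 3(3a_{11}-2a_{21}) = -9a_{11}+6a_{12}+6a_{21}-4a_{22},
\]
so that only the four upper-left entries $a_{11},a_{12},a_{21},a_{22}$ ever intervene.

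The cubed matrix $A^{(3)}$ has last row $(8,27,0)$ and last column $(8,27,0)^{T}$, so the identical expansion yields
\[
\det(A^{(3)}) = 8(27a_{12}^{3}-8a_{22}^{3}) - 27(27a_{11}^{3}-8a_{21}^{3}) = -729a_{11}^{3}+216a_{12}^{3}+216a_{21}^{3}-64a_{22}^{3}.
\]
Since $729=9^{3}$, $216=6^{3}$ and $64=4^{3}$, I would set $X=9a_{11}$, $Y=6a_{12}$, $Z=6a_{21}$, $W=4a_{22}$. The two required conditions then collapse to the single clean pair
\[
-X+Y+Z-W=1,\qquad -X^{3}+Y^{3}+Z^{3}-W^{3}=1,
\]
which is the structural heart of the construction and explains the coefficients visible in the factorizations of the entries.

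To finish, rather than expanding the cubic identity symbolically I would use an interpolation argument. Each of $a_{11},a_{12},a_{21},a_{22}$ is a polynomial of degree $3$ in $t$, so $\det(A^{(3)})$, being an integer-linear combination of their cubes, is a polynomial in $t$ of degree at most $9$. Hence it suffices to verify $\det(A^{(3)})=1$ at ten distinct values of $t$, for instance $t=0,1,\ldots,9$: a polynomial of degree at most $9$ agreeing with the constant $1$ at ten points must equal $1$ identically. In the same way $\det A$ has degree at most $3$ and is pinned down by four evaluations. As a built-in sanity check, at $t=0$ the matrix $A$ specializes to the numerical matrix $A_1$ of \eqref{defmatrixA1}, for which both determinants are already known to equal $1$.

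The only real obstacle is the cubic identity: a priori $\det(A^{(3)})$ is a degree-$9$ polynomial in $t$, and the claim is that all of its coefficients except the constant term cancel, an extensive collapse that is exactly what the parametrization was engineered to produce. If a purely symbolic proof is preferred over the interpolation shortcut, I would reduce the labour by writing $a=Y-X$ and $b=Z-W$, so that the first condition reads $a+b=1$; a short manipulation then shows the cubic condition is equivalent to the single relation $aXY+bWZ=ab$. Either way, no new idea is needed beyond carefully confirming that the chosen polynomials deliver the required cancellation.
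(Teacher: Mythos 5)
Your proposal is correct in substance but takes a genuinely different route from the paper. You verify the stated identity directly: expanding along the bordering row and column reduces both conditions to $-X+Y+Z-W=1$ and $-X^3+Y^3+Z^3-W^3=1$ with $X=9a_{11}$, $Y=6a_{12}$, $Z=6a_{21}$, $W=4a_{22}$ (these rescaled entries are in fact exactly the entries of an intermediate matrix $C$ appearing in the paper), and you then finish either by interpolation at ten points or by the reduction to $aXY+bWZ=ab$, which I have checked is algebraically correct given $a+b=1$. The paper instead works constructively in the opposite direction: it starts from a bordered matrix with $1$'s, invokes a known four-parameter solution of the system $\sum_{i=1}^{5}x_i=\sum_{i=1}^{5}x_i^3=0$ from an earlier paper of Choudhry to force $\det B=x_1$ and $\det(B^{(3)})=x_1^3$, specializes the parameters to make $x_1=1$, and then converts the border of $1$'s into the $2$'s and $3$'s by the row/column scalings of its Lemma 2.1. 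The paper's derivation explains where the polynomials come from and feeds directly into the more general $\det A=k$ construction of Section 3; your verification is self-contained and requires no external citation, which is a genuine advantage for checking the theorem as stated. The one caveat is that your argument is still a plan until the finite computation is actually performed: the interpolation route needs all ten evaluations carried out (only $t=0$ is implicitly covered by the match with $A_1$), or alternatively the identity $aXY+bWZ=ab$ must be expanded for the specific degree-$3$ polynomials. That step is routine and mechanical, so it is a matter of execution rather than a missing idea.
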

\begin{proof} We begin with the $3 \times 3$ matrix $B=(b_{ij})$  where we take 
\begin{equation}
b_{13}=b_{23}=b_{31}=b_{32}=1,\quad  b_{33}=0, \label{valbset1}
\end{equation}
 so that the matrix $B$ may be written as  follows:
\begin{equation}
B=\begin{bmatrix}
b_{11} & b_{12} & 1\\
b_{21} & b_{22} & 1 \\
1 & 1 & 0 
\end{bmatrix}.
\end{equation}

We then get,
\begin{equation}
\begin{aligned}
\det{B} & =-b_{11}  + b_{12} + b_{21} - b_{22}, \\
\det{(B^{(3)})} & =-b_{11}^3  + b_{12}^3 + b_{21}^3 - b_{22}^3.
\end{aligned}
\label{detB}
\end{equation}

We note that a parametric solution of the simultaneous diophantine equations,
\begin{equation}
\begin{aligned}
x_1+x_2+x_3+x_4+x_5 &=0, \\
x_1^3+x_2^3+x_3^3+x_4^3+x_5^3 &=0,
\end{aligned}
\label{eqx}
\end{equation}
given by Choudhry \cite[p.\ 316]{Ch}, is as follows:
\begin{equation}
\begin{aligned}
x_1  &  =  pq(r^2-s^2)+q^2r^2, \\
x_2  &  =  -(p^2s(r+s)-q^2rs),\\
x_3  &  =  p^2r(r+s)+pqr^2-q^2rs, \\
x_4  &  =  -(p^2r(r+s)+pq(r^2-s^2)),\\
x_5  &  =  p^2s(r+s)-pqr^2-q^2r^2,
\end{aligned}
\label{valx}
\end{equation}
where $p, q, r, $ and $s$ are arbitrary parameters.

With the values of $x_i, i=1,\ldots,5$, defined by \eqref{valx}, we take 
\begin{equation}
b_{11}=x_2, \quad b_{12}=-x_3,\quad  b_{21}=-x_4, \quad b_{22}=x_5, \label{valbset2}
\end{equation}
 when we get,
\begin{equation}
\begin{aligned}
\det{B} & =x_1 \\
\det{(B^{(3)})} & =x_1^3.
\end{aligned}
\label{detBspl}
\end{equation}

We now choose the parameters $p, q, r, s$, as follows:
\begin{equation}
p=36t+3,\quad q=-1, \quad r=144t+11, \quad s=-144t-9,
\end{equation}
where $t$ is an arbitrary parameter, when we  get $x_1=1$. The entries of the matrix $B$ may now be written in terms of the parameter $t$.  We rename this matrix as $C$, and write it explicitly as follows:
\begin{equation*}
C=\begin{bmatrix}
9(16t + 1)(2592t^2 + 288t + 7) & 6(18t + 1)(24t + 1)(144t + 11) & 1\\
6(12t + 1)(5184t^2 + 540t + 13) & 4(72t + 5)(1296t^2 + 153t + 4) & 1 \\
1 & 1 & 0 
\end{bmatrix}.
\label{matrixB0}
\end{equation*}
\normalsize

Since $x_1=1$, it follows from \eqref{detBspl} that the matrix $C$ satisfies the conditions   $\det{C}=1$ and   $\det{(C^{(3)})}=1$.

Now on starting with  the matrix  $C$, and using the last matrix listed in Lemma \ref{genlemma} four times, in succession, we  obtain four matrices $C_i, i=1, \ldots, 4$, as follows:
\begin{equation}
\begin{aligned}
C_1& =C([1, 3], 1/3), \quad  & C_2 &=C_1([2, 3], 1/2), \\
  C_3 & =C_2([3, 1], 3),\quad   & C_4& =C([3, 2], 2).
	\end{aligned}
\end{equation}

In view of Lemma \ref{genlemma}, each of the matrices  $C_i, i=1, \ldots, 4$, satisfies the conditions   $\det{C_i}=1$ and   $\det{(C_i^{(3)})}=1$. In fact, the matrix  $C_4$ is the matrix $A$ mentioned in the theorem. It follows that   $\det{A}=1$ and   $\det{(A^{(3)})}=1$.
\end{proof}

When $t=0$, the matrix $A$, defined by \eqref{defmatrixA}, reduces to the matrix $A_1$ given by \eqref{defmatrixA1}. As a second numerical example,  when  $t=1$, we get the matrix
\begin{equation*}
A_2=\begin{bmatrix}
49079 & 73625 &  2\\ 
74581 &  111881 &  3 \\
2 &  3 &  0
\end{bmatrix},
\end{equation*}
which satisfies the conditions   $\det{A_2}=1$ and   $\det{(A_2^{(3)})}=1$.

We note that one of the entries of the matrix $A$ given by Theorem~\ref{unimodular} is always zero. While it would be interesting to find a $3 \times 3$ integer matrix $A$, none of whose entries is  0 or $\pm 1$,  such that both $\det{A}$ and   $\det{(A^{(3)})}$ are equal to 1, we could not find such an example. 

\section{A more general problem}\label{detAk}
We will now find third order square integer matrices $A$ such that   $\det{A}=k $ and   $\det{(A^{(3)})}=k^3$, where $k \neq 1$ is a nonzero integer.

In fact, in Section~\ref{33matrices}, we  have already obtained  a solution to this problem in terms of four arbitrary parameters $p, q, r, s$, with $k= pq(r^2-s^2)+q^2r^2$, since  the matrix $B$, whose entries are defined by \eqref{valbset1} and \eqref{valbset2}, satisfies the conditions \eqref{detBspl} where the value of $x_1$ is given by \eqref{valx}. We note, however, that one entry of the matrix $B$ is always 0.

A computer search for  $ 3 \times 3$ integer matrices, none of the entries  being $0$ or $\pm 1$, such that   $\det{A}=k $ and   $\det{(A^{(3)})}=k^3$, where $k$ is an integer $ < 10$,  yielded just one such example, namely the matrix, 
\begin{equation}
M=\begin{bmatrix}
-5 &  4 &  10 \\
 5 &  3 &  11\\
 3 &  2 &  7
\end{bmatrix},
\end{equation}
such that $\det{M}=7$ and $\det{(M^{(3)})}=7^3$. The following theorem gives a more general solution of the problem with the   entries of the matrix $A$ being  given in terms of polynomials in six arbitrary integer parameters.

\begin{theorem}\label{Thgenk} If the polynomial $\phi(\alpha_1, \alpha_2, \alpha_3, \beta_1, \beta_2, \beta_3)$ is  defined by
\begin{multline}
\phi(\alpha_1, \alpha_2, \alpha_3, \beta_1, \beta_2, \beta_3) =-(\alpha_2\beta_3 + \alpha_3\beta_2)\alpha_1^8\alpha_2^2\alpha_3^2\beta_2^4\beta_3^4 - (\alpha_2^4\beta_3^4 - \alpha_2^3\alpha_3\beta_2\beta_3^3 \\
+ \alpha_2^2\alpha_3^2\beta_2^2\beta_3^2 - \alpha_2\alpha_3^3\beta_2^3\beta_3 + \alpha_3^4\beta_2^4)(\alpha_2\beta_3 + \alpha_3\beta_2)^2\alpha_1^7\beta_1\beta_2\beta_3- (\alpha_2\beta_3 + \alpha_3\beta_2)\\
\times (\alpha_2^4 \beta_3^4 + \alpha_2^2 \alpha_3^2 \beta_2^2 \beta_3^2 + \alpha_3^4 \beta_2^4)\alpha_1^6\alpha_2\alpha_3\beta_1^2\beta_2\beta_3 - 2\alpha_1^5\alpha_2^4\alpha_3^4\beta_1^3\beta_2^3\beta_3^3\\
 - (\alpha_2\beta_3 + \alpha_3\beta_2)(\alpha_2^4\beta_3^4 - 2\alpha_2^3\alpha_3\beta_2\beta_3^3 + \alpha_2^2\alpha_3^2\beta_2^2\beta_3^2 - 2\alpha_2\alpha_3^3\beta_2^3\beta_3 + \alpha_3^4\beta_2^4)\alpha_1^4\alpha_2^2\alpha_3^2\beta_1^4\\
 + 2(\alpha_2^2\beta_3^2 + \alpha_2\alpha_3\beta_2\beta_3 + \alpha_3^2\beta_2^2)\alpha_1^3\alpha_2^4\alpha_3^4\beta_1^5\beta_2\beta_3 + (\alpha_2\beta_3 + \alpha_3\beta_2)(\alpha_2^2\beta_3^2 + \alpha_3^2\beta_2^2)\\
\times \alpha_1^2\alpha_2^4\alpha_3^4\beta_1^6+ (\alpha_2^2\beta_3^2 + \alpha_3^2\beta_2^2)\alpha_1\alpha_2^5\alpha_3^5\beta_1^7, \label{defphi}
\end{multline}
with $\alpha_1, \alpha_2, \alpha_3, \beta_1, \beta_2, \beta_3$, being arbitrary integer parameters, the matrix $A$ defined by
\begin{equation}
A=\begin{bmatrix}
\phi(p, q, r, u, v, w) & \phi(q, r, p,  v, w, u) & \phi(r, p, q, w, u, v) \\
p  & q & r \\
u & v  & w
\end{bmatrix},
\label{defAk}
\end{equation}
satisfies the conditions,
\begin{equation}
\det{A}=k, \quad  \mbox{\rm and} \quad   \det{(A^{(3)})}=k^3, \label{valdetA}
\end{equation}
 where 
\begin{multline}
k =   pqr(pv - qu)(pw - ru)(qw - rv)(p^2v^2 + pquv + q^2u^2)\\
 \quad \times (p^2w^2 + pruw + r^2u^2)(q^2w^2 + qrvw + r^2v^2)(pqw + prv + qru). \label{valk}
\end{multline}
\end{theorem}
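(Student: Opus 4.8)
The plan is to reduce the two assertions in \eqref{valdetA} to explicit polynomial identities in $p,q,r,u,v,w$, and then to verify these using the cyclic symmetry of the construction together with the factorisation of $k$ to keep the computation under control.

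First I would expand both determinants along the top row. Writing $X_1=\phi(p,q,r,u,v,w)$, $X_2=\phi(q,r,p,v,w,u)$ and $X_3=\phi(r,p,q,w,u,v)$ for its three entries, the $2\times 2$ minors formed from the lower rows $(p,q,r)$ and $(u,v,w)$ are exactly the components of the cross product $(p,q,r)\times(u,v,w)$, so that
\[
\det A = X_1c_1+X_2c_2+X_3c_3,\qquad c_1=qw-rv,\ \ c_2=ru-pw,\ \ c_3=pv-qu .
\]
Cubing every entry cubes each minor's ingredients, giving
\[
\det(A^{(3)}) = X_1^3d_1+X_2^3d_2+X_3^3d_3,\qquad d_1=q^3w^3-r^3v^3,\ \ d_2=r^3u^3-p^3w^3,\ \ d_3=p^3v^3-q^3u^3 .
\]
Thus the theorem is equivalent to the two identities $\sum_i X_ic_i=k$ and $\sum_i X_i^3 d_i=k^3$.

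Next I would record two facts that tame these identities. Each cubic minor splits as $d_i=c_iQ_i$, where $Q_1=q^2w^2+qrvw+r^2v^2$, $Q_2=p^2w^2+pruw+r^2u^2$ and $Q_3=p^2v^2+pquv+q^2u^2$ are precisely the quadratic factors occurring in \eqref{valk}; pairing each $c_i$ with its $Q_i$ collapses the product in \eqref{valk} to the compact form $k=-pqr\,(pqw+prv+qru)\,d_1d_2d_3$. Moreover, the cyclic substitution $\sigma\colon(p,q,r,u,v,w)\mapsto(q,r,p,v,w,u)$ carries $X_1\mapsto X_2\mapsto X_3\mapsto X_1$ and likewise permutes $(c_1,c_2,c_3)$ and $(d_1,d_2,d_3)$ cyclically while fixing $k$; hence both sides of each identity are $\sigma$-invariant. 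This invariance is a strong consistency check and lets one organise the verification along $\sigma$-orbits rather than by a blind expansion.

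Finally I would verify the two identities. The linear one, $\sum_iX_ic_i=k$, is homogeneous of degree $24$ and can be checked by a direct expansion; one may in fact read it as the equation that the cyclic ansatz for $\phi$ is designed to satisfy. The decisive step is the cubic identity $\sum_iX_i^3d_i=k^3$, a homogeneous identity of degree $72$ in six variables, far too large to expand by hand. To make it tractable I would dehomogenise (for instance by setting $w=1$) and then use the explicit factorisation $k^3=-p^3q^3r^3(pqw+prv+qru)^3d_1^3d_2^3d_3^3$ as a template: I would confirm, by specialising the parameters so that a given irreducible factor vanishes, that $\det(A^{(3)})$ acquires that factor to the required cubed multiplicity — for instance, on the vanishing locus of $d_1$ the identity reduces to the smaller relation $X_2^3d_2+X_3^3d_3=0$ — and, once all factors are accounted for and the two sides have equal degree, match a single monomial to fix the constant. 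The main obstacle is precisely this degree-$72$ identity: since $\phi$ has degree $22$, each $X_i^3$ already has degree $66$, and controlling the cancellations that leave the perfect cube $k^3$ is the crux of the argument. The cyclic symmetry and the factor-by-factor divisibility checks are the levers I would use to break it into smaller verifications, the last of which is naturally delegated to a computer algebra system.
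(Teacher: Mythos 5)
Your reduction of the theorem to the two polynomial identities $\sum_i X_ic_i=k$ and $\sum_i X_i^3d_i=k^3$, and your observations about the factorisation $d_i=c_iQ_i$ and the cyclic symmetry, are all correct; in principle the statement is a pair of polynomial identities and a computer verification settles it. But your route is genuinely different from the paper's, and it misses the one structural idea that makes the hard identity disappear. The paper eliminates $k$ between the two conditions \eqref{condk} and \eqref{condkcub} to obtain the single homogeneous cubic \eqref{eccubic} in $(x,y,z)$, i.e.\ $\sum_i x_i^3d_i-\bigl(\sum_i x_ic_i\bigr)^3=0$, viewed as a plane cubic curve on which $(p,q,r)$ and $(u,v,w)$ are visibly rational points (both determinants vanish when two rows coincide). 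The triple $(X_1,X_2,X_3)=(\phi(p,q,r,u,v,w),\phi(q,r,p,v,w,u),\phi(r,p,q,w,u,v))$ is not an ansatz to be verified against a degree-$72$ identity: it is \emph{constructed} as the third intersection of the tangent line at $(p,q,r)$ with this cubic (the chord through the two known points is useless, since it forces $k=0$). Once one knows that $(X_1,X_2,X_3)$ satisfies \eqref{eccubic}, the cubic identity you call the crux is automatic, because \eqref{eccubic} says precisely that $\sum_i X_i^3d_i=\bigl(\sum_i X_ic_i\bigr)^3$; the only remaining computation is the degree-$24$ evaluation $\sum_i X_ic_i=k$. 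So the paper trades your degree-$72$ verification for one application of the tangent process plus a degree-$24$ linear-form evaluation.

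There is also a concrete gap in the verification strategy you do describe. Checking that $\det(A^{(3)})$ vanishes on the zero locus of a factor of $k^3$ (e.g.\ reducing to $X_2^3d_2+X_3^3d_3=0$ where $d_1=0$) establishes divisibility by that factor to multiplicity one only; to conclude divisibility by its \emph{cube}, as the template $k^3=-p^3q^3r^3(pqw+prv+qru)^3d_1^3d_2^3d_3^3$ requires, you would additionally have to show vanishing of the first and second transverse derivatives along that locus, or work modulo the cube of the factor. As written, the factor-by-factor argument accounts for only $k\mid\det(A^{(3)})$, not $k^3\mid\det(A^{(3)})$, and the degree count then no longer pins down the quotient as a constant. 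Either strengthen the multiplicity argument or, better, adopt the elimination-and-tangent viewpoint, which renders the whole issue moot.
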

\begin{proof}
We begin with the matrix $A$ defined by
\begin{equation}
A=\begin{bmatrix}
x & y &z \\
p & q & r\\
u & v & w
\end{bmatrix},
\label{Axyz}
\end{equation}
when Eqs. \eqref{valdetA} may be written as follows:
\begin{align}
(qw - rv)x + (ru-pw )y + (pv - qu)z  & = k, \label{condk}\\
(q^3w^3 - r^3v^3)x^3 + (r^3u^3-p^3w^3)y^3 + (p^3v^3 - q^3u^3)z^3  & = k^3. \label{condkcub}
\end{align}

On eliminating $k$ from Eqs. \eqref{condk} and \eqref{condkcub}, we get,
\begin{multline}
(q^3w^3 - r^3v^3)x^3 + (r^3u^3-p^3w^3)y^3 + (p^3v^3 - q^3u^3)z^3 \\
-((qw - rv)x + (ru-pw )y + (pv - qu)z)^3=0.
\label{eccubic}
\end{multline}
We note that when $(x, y, z)=(p, q, r)$, both $\det{A}$ and $\det{(A^{(3)})}$ vanish, and hence $(x, y, z)=(p, q, r)$ is a solution of Eq. \eqref{eccubic}. Similarly, $(x, y, z)=(u, v, w)$ is also a solution of Eq. \eqref{eccubic}.

Equation \eqref{eccubic} is  a homogeneous cubic equation in the variables $x, y$ and $z$, and accordingly, we may consider it as an elliptic curve in the projective plane $\mathbb{P}^2$ with two known points on the curve being $P_1=(p, q, r)$ and $P_2=(u, v, w)$. 
If we draw a line joining the points $P_1$ and $P_2$ to intersect the elliptic curve \eqref{eccubic} in a third rational point, say $(x_1, y_1, z_1)$,  and take $(x, y, z)=(x_1, y_1, z_1)$, the left-hand side of both Eqs. \eqref{condk} and \eqref{condkcub} becomes $0$, and we do not get a nonzero value of $k$ as desired. Accordingly, we draw a tangent at the point $P_1$ to intersect the elliptic curve in a point $P_3$  whose coordinates are as follows:
\begin{equation}
x=\phi(p, q, r, u, v, w),\;  y=\phi(q, r, p,  v, w, u),\;  z=\phi(r, p, q, w, u, v), \label{valxyz}
\end{equation}
where $\phi(\alpha_1, \alpha_2, \alpha_3, \beta_1, \beta_2, \beta_3) $ is defined by \eqref{defphi}.

The values of $x, y, z$, given by \eqref{valxyz} satisfy Eq. \eqref{eccubic}, and further, the value of $k$ obtained from Eq. \eqref{condk} is given by \eqref{valk}. On substituting the values of $x, y, z$, given by \eqref{valxyz} in \eqref{Axyz}, we get the matrix $A$,  defined by \eqref{defAk}, that satisfies the conditions \eqref{valdetA} with the value of $k$ being given by \eqref{valk} as stated in the theorem. 
\end{proof}

We note that in any numerical example of the matrix $A$, if the greatest common divisor $g$ of the entries of any row (or column) of the matrix $A$ is $> 1$, then $g$ is also a factor of $\det{A}$, and we can divide the entries of that row (or column) by $g$ to get a numerically smaller example of a matrix satisfying the specified conditions. For instance, when $(p, q, r, u, v, w)=(2, -3, 3, 3, -2, 4)$, we get a matrix which, on factoring out the greatest common divisor of the entries of the first row, yields the matrix,
\begin{equation*}
A = \begin{bmatrix}
-57797 & -109147 & -22789\\
 2 & -3 & 3\\
 3 & -2 & 4
\end{bmatrix},
\end{equation*}
for which $\det{A}=123690$ and $\det{(A^{(3)})}=123690^3$.

\bigskip

\noindent Postal Address: Ajai Choudhry, 13/4 A Clay Square,\\
\hspace*{1.1 in} Lucknow - 226001, India.

\noindent E-mail address: ajaic203@yahoo.com

\end{document}